\documentclass[11pt]{amsart}
\usepackage{}
\usepackage{palatino}
\usepackage{color}

\newcommand{\p}{\partial}

\newcommand{\FF}{\mathbb{F}}

\newcommand{\NN}{\mathbb{N}}

\newcommand{\cO}{\mathcal{O}}

\newcommand{\fg}{\mathfrak{g}}

\newcommand{\fl}{\mathfrak{l}}
\newcommand{\fm}{\mathfrak{m}}

\newcommand{\ft}{\mathfrak{t}}

\DeclareMathOperator{\Aut}{Aut}

\DeclareMathOperator{\Char}{char}

\DeclareMathOperator{\Der}{Der}

\DeclareMathOperator{\GL}{GL}

\DeclareMathOperator{\id}{id}

\DeclareMathOperator{\Lie}{Lie}
\DeclareMathOperator{\Mat}{Mat}

\DeclareMathOperator{\C}{C}

\DeclareMathOperator{\N}{N}
\DeclareMathOperator{\Ind}{Ind}
\DeclareMathOperator{\dv}{div}

\DeclareMathOperator{\Tor}{Tor}

\usepackage{pictex}
\usepackage{amscd}
\usepackage{latexsym}
\usepackage{amssymb}
\usepackage{eucal}
\usepackage{eufrak}
\usepackage{verbatim}
\usepackage{hyperref}

\setlength{\textheight}{8.8in}
\setlength{\textwidth}{6.5in}
\setlength{\topmargin}{0.0in}
\setlength{\oddsidemargin}{-.2in}
\setlength{\evensidemargin}{-.2in}

\numberwithin{equation}{section}
\newtheorem{Theorem}{Theorem}[section]
\newtheorem{Lemma}[Theorem]{Lemma}
\newtheorem{Corollary}[Theorem]{Corollary}
\newtheorem{Proposition}[Theorem]{Proposition}

\theoremstyle{Theorem}

\newtheorem*{thm*}{Theorem}
\newtheorem*{thm**}{Corollary}
\newtheorem*{thm***}{Theorem B}
\theoremstyle{remark}

\numberwithin{equation}{section}

\begin{document}
\title[Semisimple Orbits]{On the semisimple orbits of restricted Cartan type Lie algebras $W,~S$ and $H$}
\author[Hao Chang \lowercase{and} Ke Ou]{Hao Chang \lowercase{and} Ke Ou*}
\address[Hao Chang]{School of Mathematics and Statistics, Central China Normal University, 430079 Wuhan, People's Republic of China}
\email{chang@ccnu.edu.cn}
\address[Ke Ou]{School of Statistics and Mathematics, Yunnan University of Finance and Economics, 650221 Kunming, People's Republic of China}
\email{keou@ynufe.edu.cn}
\date{\today}
\thanks{* Corresponding author.}

\subjclass[2010]{17B05, 17B50}
\keywords{Weyl groups, Lie algebras of Cartan type, semisimple orbits.}
\makeatletter
\makeatother
\begin{abstract}
In this short note,
we give a description of semisimple orbits in
the restricted Cartan type Lie algebras $W, S, H$.
\end{abstract}
\maketitle
\section{Introduction}
Let $(\fg,[p])$ be a restricted Lie algebra with connected automorphism group $G_\fg:=\Aut_p(\fg)^{\circ}$.
The algebraic group $G_\fg$ acts naturally on the constructible set $S_\fg$ of semisimple elements of $\fg$.
A basic problem is to understand the set $S_\fg/G_\fg$ of semisimple $G_\fg$-orbits.

In the classical case,
where $\fg:=\Lie(G)$ is the Lie algebra of a connected reductive group $G$,
all maximal tori of $\fg$ are $G$-conjugate (hence $G_\fg$-conjugate) and there is a bijective correspondence $S_\fg/G\rightarrow\ft/W$,
where $\ft$ is a maximal torus and  $W$ its corresponding Weyl group (cf. \cite[(7.12)]{Ja}).
If $\fg$ is not an algebraic Lie algebra,
then maximal tori are not necessarily $G_\fg$-conjugate.
In fact,
for the non-classical simple Lie algebras (which,
by the classification theorem of Block-Wilson-Premet-Strade (cf. \cite{St}),
are of Cartan type provided that the characteristic of $k$ is larger than $5$),
the maximal tori are not all conjugate under the action
of the automorphism group (\cite[Chapter 7]{St}).

In this paper, we study Lie algebras of Cartan type $\fg:=W,S,H$.
In these cases, $\fg$ possesses finitely many $G_\fg$-conjugacy classes of maximal tori.
These algebras have a natural filtration
$$\fg=\fg_{(-1)}\supseteq\fg_{(0)}\supseteq\cdots\supseteq\fg_{(s)}\supseteq(0)$$
by $[p]$-stable subspaces.
Let $\langle x\rangle_p$ denote the torus generated by a semisimple element $x\in S_\fg$.
We define a function
\begin{align}\label{index formula introduction}
\Ind_{\fg}: S_{\fg}\rightarrow\NN_0;~x\mapsto\dim_k\langle x\rangle_p/(\langle x\rangle_p\cap\fg_{(0)}),
\end{align}
whose fibers are $G_\fg$-stable.
Given a maximal torus $\ft_\fg$ of $\fg$,
we consider the Weyl group $W(\fg,\ft_\fg)$ relative to $\ft_\fg$,
defined via $W(\fg,\ft_\fg):=\N_{G_\fg}(\ft_\fg)/\C_{G_\fg}(\ft_\fg)$,
where $\N_{G_\fg}(\ft_\fg)$ and $\C_{G_\fg}(\ft_\fg)$ are the normalizer and the centralizer of $\ft_\fg$ in $G_\fg$,
respectively.
Using basic results on tori,
due to Demushkin \cite{De1, De2},
every maximal torus $\ft_\fg\subseteq\fg$ has the same dimension $\mu(\fg)$.
Moreover,
up to conjugacy,
every integer $0\leq r\leq\mu(\fg)$ gives rise to a unique maximal torus $\ft_{\fg,r}$
such that $\Ind_\fg(x)\leq r$ for all $x\in\ft_{\fg,r}$ and $\ft_{\fg,r}^r:=\Ind_\fg^{-1}(r)\cap\ft_{\fg,r}\neq\emptyset$.
The $W(\fg,\ft_{\fg,r})$-orbits on $\ft_{\fg,r}$ are distinguished by the values of invariant functions,
and the invariants were determined by the second author in \cite[Proposition 3.2]{Ou}. Actually, in the case $r=\mu(\fg)$, the isomorphism $W(\fg,\ft_{\fg,\mu(\fg)})\cong\GL_{\mu(\fg)}(\FF_p)$ was established in \cite{Pre} and \cite{BFS}, and the invariant functions on $ \ft_{\fg,\mu(\fg)} $ under $ \GL_{\mu(\fg)}(\FF_p) $ action were determined in a classical work of L. Dickson \cite{Di}.

The main result reads:
\begin{thm*}
Let $r\in\{0,1,\dots,\mu(\fg)\}$.
Then there is a bijective correspondence $$\Ind_\fg^{-1}(r)/G_\fg\rightarrow\ft_{\fg,r}^r/W(\fg,\ft_{\fg,r}).$$
\end{thm*}
More details refer to Theorem \ref{main theorem}.
We will give description the quotients $\ft_{\fg,r}^r/W(\fg,\ft_{\fg,r})$ by employing $p$-polynomials in Proposition \ref{main prop} respectively.

\emph{Throughout this paper, $k$ denotes an algebraically closed field of characteristic $\Char(k)=:p>3$.}

\bigskip
\noindent
\textbf{Acknowledgments.} This work is supported by NSFC (No. 11801204),
NSF of Yunnan Province (No. 2020J0375),
the Fundamental Research Funds of YNUFE (No. 80059900196).
We are indebted to the referee for
carefully reading the manuscript and providing numerous comments.

\bigskip
\section{Preliminaries}
\subsection{Cartan type Lie algebras type $W,S,H$}\label{section Cartan type lie algebras}
Let $A(n):=k[X_1,\dots,X_n]/(X_1^p,\dots,X_n^p)$ be the truncated polynomial ring in $n$ variables.
We write $x_i$ for the image of $X_i$ in $A(n)$.
Note that $A(n)$ is a finite-dimensional local algebra,
with maximal ideal $\fm:=\langle x_1,\dots,x_n\rangle$.
The Lie algebra $W(n):=\Der(A(n))$ is called the \textit{$n$-th Jacobson-Witt algebra.}
It is an $A(n)$-module in an obvious way,
and has a standard basis $\{x_1^{\alpha_1}\cdots x_n^{\alpha_n}\partial_i;~0\leq\alpha_j<p, 1\leq i\leq n\}$
where $\partial_i$ denotes the partial derivative with respect to the variable $x_i$.

Define the linear map $\dv:W(n)\rightarrow A(n)$ by
\begin{align*}\label{div map}
\dv(\p)=\sum\limits_{i=1}^{n}\p_i(\p(x_i)).
\end{align*}
The Lie algebra $S(n)$ is defined via $S(n):=\{\p\in W(n);~\dv(\p)=0\}$
and the derived algebra $S(n)^{(1)}$ is called \textit{special algebra}.
If $n\geq 3$,
then $S(n)^{(1)}$ is restricted and simple.

Let us move on to the family $H(2m)$.
For $i\in\{1,\ldots,2m\}$, we put
\begin{equation*}\label{definition of sigma i}
\sigma(i):=\left\{
\begin{aligned}
1,~&1\leq i\leq m, \\
-1,~&m+1\leq i\leq 2m.
\end{aligned}
\right.
\end{equation*}
In addition, we define
\begin{equation*}\label{definition of i'}
i':=\left\{
\begin{aligned}
i+m,~& 1\leq i\leq m, \\
i-m,~& m+1\leq i\leq 2m.
\end{aligned}
\right.
\end{equation*}
Let $H(2m):=\{\sum\limits_{i=1}^{2m}f_i\p_i\in W(2m);~\sigma(i)\p_{j'}(f_i)=\sigma(j)\p_{i'}(f_j)~1\leq i,j\leq 2m\}$.
The Lie subalgebra $H(2m)^{(2)}$ of $H(2m)$ is simple and restricted, and we call it a \textit{Hamiltonian algebra}.

From now on we will (by abuse of notation) write $W(n)$,
$S(n)$ and $H(n)$ for the corresponding simple derived subalgebra,
with the convention that $n = 2m$ for the Hamiltonian type.

Suppose that $\fg=X(n), $ where $ X\in\{W,S,H\}$. By definition, it possesses a \textit{restricted $\mathbb{Z}$-grading}
\begin{align}\label{Z-grading}
\fg=\bigoplus\limits_{i=-1}^s\fg_i,\quad [\fg_i,\fg_j]\subseteq\fg_{i+j},\quad\fg_i^{[p]}\subseteq\fg_{pi}, \quad s\geq 1.
\end{align}
Given such an algebra $\fg$,
we consider the associated descending filtration $(\fg_{(i)})_{i\geq -1}$,
defined via
\begin{align}\label{natural filtration}
\fg_{(i)}:=\sum\limits_{j\geq i}\fg_j.
\end{align}

\subsection{Automorphism groups}\label{section automorphism groups}
Let us gather some facts on automorphisms.
It is well known that we have an isomorphism $\Aut(A(n))\cong\Aut_p(W(n))$;
$\varphi\mapsto\sigma_{\varphi}$,
given by
\begin{align}\label{assignment}
\sigma_{\varphi}(\p)=\varphi\circ\p\circ\varphi^{-1}
\end{align}
for all $\p\in W(n)$.
If $\fg\in\{W,S,H\}$,
then the group $\Aut_p(\fg)$ is connected, i.e. $G_\fg=\Aut_p(\fg)$,
and we have
$$G_\fg\cong\{g\in\Aut_p(W(n));~g(\fg)\subseteq \fg\}.$$
Moreover,
the group $G_\fg$ is a semidirect product $G_0\ltimes U$,
where $G_0$ consists of those
automorphisms preserving the $\mathbb{Z}$-grading (\ref{Z-grading}) of $\fg$ and $U$ is the unipotent radical (\cite{W}).
It is a consequence of the semidirect product decomposition that
\begin{align}\label{G-stable filtration}
g(\fg_{(i)})=\fg_{(i)}
\end{align}
for every $g\in G_\fg$ and $i\in\mathbb{Z}$.

Recall that the Poisson Lie algebra structure on $A(2m)$ is given by $\{f,g\}=D_H(f)(g)$ for all $f, g\in A(2m)$ (cf. \cite[Section 4.2]{St}),
where the linear map $D_H$ is defined by
$$D_H: A(2m)\rightarrow W(2m);~f\mapsto\sum_{i=1}^{2m}\sigma(i)\p_i(f)\p_{i'}.$$

For ease of reference we record the following well-known result:
\begin{Lemma}\cite[Theorem 7.3.4, 7.3.6]{St}\label{automorphism groups}
Let $\varphi\in\Aut(A(n))$.
Then $\sigma_\varphi$ induces an automorphism of $S(n)$ if and only if
\begin{align}\label{automorphism Sn}
\det(\p_i\varphi(x_j))\in k\smallsetminus\{0\},
\end{align}
and $\sigma_\varphi$ induces an automorphism of $H(n)$ if and only if
\begin{align}\label{auto of H}
\{\varphi(x_i), \varphi(x_j)\}=a\sigma(i)\delta_{i',j}
\end{align}
for some $a\in k\smallsetminus\{0\}$ and all $i, j$.
\end{Lemma}

\subsection{Maximal tori}\label{section maximal tori}
Given a restricted Lie algebra $(\fg,[p])$,
we let $\mu(\fg)$ denote the maximal dimension of all tori $\ft\subseteq\fg$ and let
$$\Tor(\fg):=\{\ft\subseteq\fg;~\ft~\text{torus},~\dim_k\ft=\mu(\fg)\}$$
be the set of tori of maximal dimension.

In the case of restricted Cartan type Lie algebras,
every maximal torus has maximal dimension (cf. \cite[Section 7.5]{St}).
Assume that $\fg\in\{W,S,H\}$.
Since the natural filtration (\ref{natural filtration}) is stable under the action of $G_\fg$,
it follows that the function
$$\chi_0: \Tor(\fg)\rightarrow\NN_0;~\ft\mapsto\dim_k(\ft\cap\fg_{(0)})$$
is constant on the $G_\fg$-orbits of $\Tor(\fg)$.
As shown by Demushkin in \cite{De1, De2},
there are exactly $\mu(\fg)+1$ orbits $\cO_0,\dots,\cO_{\mu(\fg)}$ in $\Tor(\fg)$ under the $G_\fg$-action,
and these have the following description:
$$\cO_r=\{\ft\in\Tor(\fg);~\chi_0(\ft)=\mu(\fg)-r\}.$$
For each of the three Cartan types we have canonical orbit representatives $\ft_{\fg,r}$ of $\cO_r$
given by
\begin{align}\label{tori direct sum}
\ft_{\fg,r}=\ft_{\fg,r}'\oplus\ft_{\fg,r}''
\end{align}
where
\begin{align}\label{tori in W}
\ft_{\fg,r}'=\sum\limits_{i=1}^r k(1+x_i)\p_i\quad{\rm and}\quad\ft_{\fg,r}''=\sum\limits_{i=r+1}^nkx_i\p_i,\quad\quad~{\rm if}~\fg=W(n)
\end{align}
\begin{align}\label{tori in S}
\ft_{\fg,r}'=\sum\limits_{i=1}^r k((1+x_i)\p_i-x_n\p_n)\quad{\rm and}\quad\ft_{\fg,r}''=\sum\limits_{i=r+1}^{n-1}k(x_i\p_i-x_n\p_n),\quad\quad~{\rm if}~\fg=S(n)
\end{align}
\begin{align}\label{tori in H}
\ft_{\fg,r}'=\sum\limits_{i=1}^r k((1+x_i)\p_i-x_{i'}\p_{i'})\quad{\rm and}\quad\ft_{\fg,r}''=\sum\limits_{i=r+1}^{m}k(x_i\p_i-x_{i'}\p_{i'}),\quad\quad~{\rm if}~\fg=H(2m)
\end{align}

\subsection{Weyl groups of $W,S,H$}\label{section weyl group}
Let $(\fg,[p])$ be a restricted Lie algebra with connected automorphism group $G_\fg$,
$\ft\subseteq\fg$ be a torus and we let $\N_{G_\fg}(\ft)$ and $\C_{G_\fg}(\ft)$ be the normalizer and the centralizer of $\ft$ in $G_\fg$,
respectively.
The group
$$W(\fg,\ft):=\N_{G_\fg}(\ft)/\C_{G_\fg}(\ft)$$
is referred to as the \textit{Weyl group of $\fg$ relative to $\ft$}.

For the three Cartan types $W,S$ and $H$,
we are interested in the Weyl group relative to maximal torus.
Let $\ft\in\Tor(\fg)$.
Since $W(\fg,g.\ft)\cong W(\fg,\ft)$ for every $g\in G_\fg$,
the Weyl group $W(\fg,\ft)$ only depends on the orbit $G_\fg.\ft\subseteq\Tor(\fg)$.
We may hence assume that $\ft=\ft_{\fg,r}$ (\ref{tori in W}),
(\ref{tori in S}),
(\ref{tori in H}) for some $0\leq r\leq\mu(\fg)$.

The following result was proved by Jensen in \cite{Je},
Prop. 3.6:
\begin{Proposition}\label{Jesen's result}
Assume $\fg\in\{W,S,H\}$.
Then
$W(\fg,\ft_{\fg,r})\cong(W_1\times W_2)\ltimes W_3$,
with
\begin{align}\label{Weyl group W_1}
W_1\cong\left\{
\begin{aligned}
&S_{n-r}&\quad {\rm if}~\fg=W(n) \\
&S_{n-r}& {\rm if}~\fg=S(n)\\
&S_{m-r}\ltimes\mathbb{Z}_2^{m-r}&\quad{\rm if}~\fg=H(n)
\end{aligned}
\right.
\end{align}
\begin{align}\label{Weyl group W_2}
W_2\cong\GL_{r}({\FF_p})
\end{align}
\begin{align}\label{Weyl group W_3}
W_3\cong\Mat_{r,\mu(\fg)-r}(\FF_p)
\end{align}
\end{Proposition}

\section{Semisimple orbits in $W,S,H$ }
\subsection{Semisimple elements in the standard tori}\label{semisimple elements in standard tori}
Assume that $\fg\in\{W,S,H\}$.
If $x\in S_\fg$ is a semisimple element,
then $\langle x\rangle_p$ denotes the torus generated by $x$.
We define a function
\begin{align}\label{index formula}
\Ind_{\fg}: S_{\fg}\rightarrow\NN_0;~x\mapsto\dim_k\langle x\rangle_p/(\langle x\rangle_p\cap\fg_{(0)}).
\end{align}
The {\it index} of an element $x$ is defined as $\Ind_{\fg}(x).$
In view of Section \ref{section maximal tori},
we have $\Ind_\fg(S_\fg)=\{r\in\NN_0;~0\leq r\leq\mu(\fg)\}$.
Given $r\in\{0,1,\dots,\mu(\fg)\}$,
it follows from (\ref{G-stable filtration}) that each fiber $\Ind_\fg^{-1}(r)$ is $G_\fg$-stable.
Clearly, $S_\fg$ is the disjoint union of all fibers, i.e.,
$$S_\fg=\bigcup\limits_{r=0}^{\mu(\fg)}\Ind_\fg^{-1}(r).$$
Indeed,
$\dim_k\ft_{\fg,r}/(\ft_{\fg,r}\cap\fg_{(0)})=r$ implies that
\begin{align}\label{tgrindexleq r}
\Ind_\fg(x)\leq r\quad {\rm for~all}~x\in\ft_{\fg,r}.
\end{align}
We denote by $\ft_{\fg,r}^{r}:=\ft_{\fg,r}\cap\Ind_\fg^{-1}(r)$ the set of those elements of $\ft_{\fg,r}$ whose index is $r$.
If $r=0$, then (\ref{tgrindexleq r}) yields $\ft_{\fg,0}^{0}=\ft_{\fg,0}$.
Note that the dimension $\Ind_\fg(x)$ does not change when $x$ is replaced by its $G_\fg$-conjugate.
Observing (\ref{tori in W}), (\ref{tori in S}) and (\ref{tori in H}),
we conclude that $\Ind_\fg^{-1}(0)$ is just the $G_\fg$-saturation $G_\fg.\ft_{\fg,0}$.

We put $y_i:=x_i+1$.
In order to describe the set $\Ind_\fg^{-1}(r)$,
we introduce the following notations:
$$
\ft_{\fg,r}=\langle d_1^{\fg,r},\ldots,d_{\mu(\fg)}^{\fg,r}\rangle, \quad 0\leq r\leq\mu(\fg).
$$
\begin{equation}\label{standard basis W}
d_i^{\fg,r}:=\left\{
\begin{aligned}
y_i\p_i,~& 1\leq i\leq r, \\
x_i\p_i,~& r+1\leq i\leq n.
\end{aligned}
\right.~\quad\quad~{\rm if}~\fg=W(n)
\end{equation}
\begin{equation}\label{standard basis S}
d_i^{\fg,r}:=\left\{
\begin{aligned}
y_i\p_i-x_n\p_n,~& 1\leq i\leq r, \\
x_i\p_i-x_n\p_n,~& r+1\leq i\leq n-1.
\end{aligned}
\right.~\quad\quad~{\rm if}~\fg=S(n)
\end{equation}
\begin{equation}\label{standard basis H}
d_i^{\fg,r}:=\left\{
\begin{aligned}
y_i\p_i-x_{i'}\p_{i'},~& 1\leq i\leq r, \\
x_i\p_i-x_{i'}\p_{i'},~& r+1\leq i\leq m.
\end{aligned}
\right.~\quad\quad~{\rm if}~\fg=H(2m)
\end{equation}

The following lemma is well-known. We provide a proof here for convenience.
\begin{Lemma}\label{3.1}
Given a restricted Lie algebra $(\fl,[p])$, we let $\ft\subseteq\fl$ be a torus with basis $\{t_1,\dots,t_n\}$ such that $t_i^{[p]}=t_i$ for $1\leq i\leq n$.
If $t=\sum_{i=1}^{n}\lambda_it_i\in\ft$, then $\ft=\langle t, t^{[p]},\cdots,t^{[p]^{n-1}}\rangle $ if and only if $\lambda_1,\ldots,\lambda_n$ are $\FF_p$-linearly independent.

\end{Lemma}
\begin{proof}

It is clear that $\{t, t^{[p]},\cdots,t^{[p]^{n-1}}\}$ is linearly independent if and only if $\det( (\lambda_i ^{p^{j-1}})_{1\leq i,j\leq n} )\neq 0$.
Since $ \det( (\lambda_i ^{p^{j-1}})_{1\leq i,j\leq n} )= \prod_{i=1}^n \prod_{a_1,\cdots, a_{{i-1}} \in\FF_p} (a_1\lambda_1+\cdots+a_{i-1}\lambda_{i-1}+\lambda_i) $ (see \cite[Section 2]{Di} for example), we have $ \det((\lambda_i ^{p^{j-1}})_{1\leq i,j\leq n} )\neq 0 $ if and only if $\lambda_1,\ldots,\lambda_n$ are $\FF_p$-linearly independent.
\end{proof}

\begin{Lemma}\label{key lemma}
Keep the notations as above.
Let $1\leq r\leq \mu(\fg)$ and $\ft_{\fg,r}$  be the standard maximal torus with basis $\{d_1^{\fg,r},\ldots,d_{\mu(\fg)}^{\fg,r}\}$.
Suppose that $d=\sum_{i=1}^{\mu(\fg)}\lambda_id_i^{\fg,r}\in\ft_{\fg,r}$.
Then $d\in\ft_{\fg, r}^r$ if and only if $\lambda_1,\ldots,\lambda_r$ are $\FF_p$-linearly independent.
\end{Lemma}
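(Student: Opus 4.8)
The plan is to translate the index into Demushkin's invariant $\chi_0$ and then study the minimal torus generated by $d$. First I would record the reformulation
\[
\Ind(d)=\mu(\fg)-\max\{\chi_0(\ft);~\ft\in\Tor(\fg),~d\in\ft\},
\]
which is immediate from Lemma \ref{Demushkin results}: a maximal torus $\ft\ni d$ is $G$-conjugate to $\ft_{\mu(\fg)-\chi_0(\ft)}^\fg$ and such a conjugation carries $d$ into that standard torus, while conversely any $g$ with $g.d\in\ft_{r'}^\fg$ yields the maximal torus $g^{-1}.\ft_{r'}^\fg\ni d$ with $\chi_0=\mu(\fg)-r'$. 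So bounding $\Ind(d)$ amounts to bounding $\chi_0(\ft)$ over all maximal tori through $d$.

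For the lower bound I would pass to the minimal torus $T(d):=\langle d^{[p]^j};~j\ge 0\rangle_k$. Since the $d_i^{\fg,r}$ are commuting toral elements, $d^{[p]^j}=\sum_i\lambda_i^{p^j}d_i^{\fg,r}$, and modulo $\fg_{(0)}$ each $d_i^{\fg,r}$ with $i\le r$ contributes its degree $-1$ part $\p_i$, the other basis vectors lying in $\fg_{(0)}$. Hence the image of $T(d)$ in $\fg/\fg_{(0)}$ is the $k$-span of the vectors $(\lambda_1^{p^j},\ldots,\lambda_r^{p^j})$, whose dimension equals $e_r:=\dim_{\FF_p}\langle\lambda_1,\ldots,\lambda_r\rangle_{\FF_p}$ by the nonvanishing of the Moore determinant of an $\FF_p$-independent family. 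As $T(d)\subseteq\ft$ for every maximal torus $\ft\ni d$, this forces $\dim(\ft\cap\fg_{(0)})\le\mu(\fg)-e_r$, i.e. $\Ind(d)\ge e_r$. In particular, if $\lambda_1,\ldots,\lambda_r$ are $\FF_p$-independent then $e_r=r$, and together with the obvious bound $\Ind(d)\le r$ coming from $d\in\ft_r^\fg$ we obtain $\Ind(d)=r$; this is the easy implication.

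For the converse I would argue the contrapositive: if $\lambda_1,\ldots,\lambda_r$ are $\FF_p$-dependent I will conjugate $d$ into $\ft_{r-1}^\fg$, giving $\Ind(d)\le r-1$. After permuting the first $r$ indices (realised by a coordinate, resp. symplectic, permutation preserving $\ft_r^\fg$) I may assume $\lambda_r=\sum_{i<r}c_i\lambda_i$ with $c_i\in\FF_p$. Setting $E:=\prod_{i<r}y_i^{c_i}$, the element $w_r:=y_rE^{-1}$ is a $d$-eigenvector of eigenvalue $\lambda_r-\sum_{i<r}c_i\lambda_i=0$ with constant term $1$, so $u_r:=w_r-1$ is a new coordinate annihilated by $d$; keeping $u_i:=x_i$ for the remaining position variables and rewriting $d=\sum_i d(u_i)\p_{u_i}$ exhibits $d$ with only $r-1$ ``affine'' directions, i.e. inside $\ft_{r-1}^\fg$. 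For $\fg=\W$ this substitution is already an automorphism and the argument is complete.

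The main obstacle is to make this coordinate change respect the extra structure defining $S$ and $H$. For $\SSS$ the naive substitution has Jacobian determinant $E^{-1}\notin k^*$, so I would correct it with the distinguished variable $x_n$, putting $u_n:=x_nE$; a direct expansion shows the total Jacobian determinant becomes $1$, whence the map is special by (\ref{automorphism of S}), and one checks $d(u_n)=(\lambda_r-\sum_{i<n}\lambda_i)u_n$, precisely the coefficient forced by the defining relation of $\ft_{r-1}^S$. For $\HHH$ the substitution must be symplectic: I would take the cotangent lift of the position change, keeping $u_i=x_i$ and $u_r$ as above and transforming the momenta by the inverse transpose Jacobian, which gives explicitly $u_{r'}=Ex_{r'}$ and $u_{k'}=x_{k'}+c_k\,y_ry_k^{-1}x_{r'}$ for $k<r$. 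Verifying the Poisson relations $\{u_i,u_j\}=\sigma(i)\delta_{i',j}$, so that $\sigma_\varphi\in\Aut(\HHH)$ by Theorem \ref{automorphism ofHH}, and then computing $d(u_{k'})=-\lambda_k u_{k'}$ and $d(u_{r'})=0$, is the technical heart of the proof; granting it, $d$ again lands in $\ft_{r-1}^H$, and the lemma follows.
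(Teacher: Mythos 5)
Your proof is correct, and the two halves relate to the paper's proof differently. For the implication ``$\lambda_1,\dots,\lambda_r$ $\FF_p$-independent $\Rightarrow d\in\ft_r^{\fg,r}$'' you argue via the restricted filtration: the cyclic torus $T(d)=\langle d^{[p]^j}\rangle_k$ lies in every maximal torus through $d$, and its image in $\fg/\fg_{(0)}$ has dimension $\dim_{\FF_p}\langle\lambda_1,\dots,\lambda_r\rangle$ by the Moore-matrix rank computation; this bounds $\chi_0$ from above on all tori containing $d$, hence $\Ind(d)$ from below. The paper instead proves the contrapositive by decomposing $A(n)$ into weight spaces for a conjugate $\sum_i a_id_i^{\fg,s}$ with $s<r$ and using that each $\varphi(y_i)$ is invertible, so its weight $\lambda_i$ must be an $\FF_p$-combination of $a_1,\dots,a_s$. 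Your route is more uniform across $W,S,H$ and in fact yields the sharper statement $\Ind(d)=\dim_{\FF_p}\langle\lambda_1,\dots,\lambda_r\rangle$, which the lemma only implicitly contains. For the converse both you and the paper construct an explicit automorphism pushing $d$ into $\ft_{r-1}^\fg$; the paper uses the additive substitution $x_r\mapsto x_r+Y^{(u)}-1$ where you use the multiplicative one $y_r\mapsto y_rE^{-1}$, each requiring the analogous divergence correction for $\SSS$ and momentum correction for $\HHH$ (I checked that your cotangent-lift momenta $u_{r'}=Ex_{r'}$, $u_{k'}=x_{k'}+c_ky_ry_k^{-1}x_{r'}$ satisfy the Poisson relations of Theorem \ref{automorphism ofHH} and the stated eigenvalue equations). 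So the second half is essentially the paper's argument in different coordinates, while the first half is a genuinely different and somewhat cleaner argument.
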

\begin{proof}
If $\lambda_1,\dots,\lambda_r$ are linearly independent over the prime field $\FF_p$,
then the coset of $d$ modulo $\ft_{\fg,r}\cap\fg_{(0)}$ generates an $r$ dimensional torus by Lemma \ref{3.1},
so that $\Ind_\fg(d)=\dim_k\langle d\rangle_p/(\langle d\rangle_p\cap\fg_{(0)})=r$, i.e. $d\in\ft_{\fg, r}^r$.

Conversely, if $\lambda_1,\dots,\lambda_r$ are $\FF_p$-linearly dependent,
then
$$d\equiv \alpha_1t_1+\cdots+\alpha_mt_m~(\mathrm{mod}~\ft_{\fg,r}\cap\fg_{(0)})$$
where $m<r$, $\alpha_i\in k$ and each $t_i$ is a linear combination of $d_1^{\fg,r},\dots,d_{\mu(\fg)}^{\fg,r}$
with coefficients in $\FF_p$.
In this case
$$\langle d\rangle_p\subseteq kt_1+\cdots kt_m+\fg_{(0)},$$
and it is immediately clear that $\Ind_\fg(d)<r$.
\end{proof}

\begin{Lemma}\label{r-1lemma}
Suppose that $d=\sum_{i=1}^{\mu(\fg)}\lambda_id_i^{\fg,r}\in\ft_{\fg,r}$.
If $\lambda_1,\dots,\lambda_r$ are $\FF_p$-linearly dependent,
then there exists $\sigma_\varphi\in G_\fg$ such that $\sigma_\varphi(d)\in\ft_{\fg,r-1}$.
\end{Lemma}
\begin{proof}
By assumption,
there exist $u_i\in\FF_p$ such that $\sum_{i=1}^ru_i\lambda_i=0$.
We may assume without loss of generality that $u_r=-1$,
so that $\lambda_r=\sum_{i=1}^{r-1}u_i\lambda_i$.
We put $(u):=(u_1,\dots,u_{r-1})$ and define $y^{(u)}:=y_1^{u_1}\cdots y_{r-1}^{u_{r-1}}$.

Assume first that $\fg=W(n)$.
We define an automorphism of $A(n)$ by
\[ \varphi(x_i)=\left\{
	\begin{tabular}{ll}
	$ x_i $ & if~$ i\neq r,$\\
	$ x_r+y^{(u)}-1 $  & if~$ i=r.$
\end{tabular}\right. \]
Using (\ref{assignment})(see also the formula in \cite[p. 234]{De1})) one can show by direct computation that
$$\sigma_\varphi(d)=\sum\limits_{i=1}^{r-1}\lambda_iy_i\p_i+\sum\limits_{i=r}^n\lambda_ix_i\p_i\in\ft_{W,r-1}.$$

Assume now $\fg=S(n)$.
Define $\varphi\in\Aut(A(n))$ by
\[ \varphi(x_i)=\left\{
	\begin{tabular}{ll}
	$ x_i $ & if~$ i\neq r,$\\
	$ x_r+y^{(u)}-1 $  & if~$ i=r.$
\end{tabular}\right. \]
As $\det(\p_i(\varphi(x_j)))=1$,
Lemma \ref{automorphism groups} ensures that $\sigma_\varphi\in G_\fg$.
Then we have:
$$\sigma_\varphi(d)=\sum\limits_{i=1}^{r-1}\lambda_i(y_i\p_i-x_n\p_n)+\sum\limits_{i=r}^{n-1}\lambda_i(x_i\p_i-x_n\p_n)\in\ft_{S,r-1}.$$

Finally the case $\fg=H(n)$,
with $n=2m$ for some $m\geq 1$.
Using multi-index notation (see \cite[Section 2.1]{St}),
we define
\[\varphi(x_i)=\left\{
		\begin{tabular}{ll}
		$ x_i $ & if $ i\neq r$ and $1\leq i\leq m$,\\
		$ x_r+y^{(u)}-1$  & if $i=r$,\\
		$ x_i-u_{i'}y^{(u)-\epsilon_{i'}}x_{m+r} $ & if $ m+1\leq i\leq m+r-1$,\\
		$ x_i $ & if $ m+r\leq i\leq 2m. $
		\end{tabular}\right. \]
Observing $\det(\p_i(\varphi(x_j)))=1$,
we thus obtain $\varphi\in\Aut(A(n))$.
Moreover, we claim that
$$\{\varphi(x_i), \varphi(x_j)\}=\sigma(i)\delta_{i',j} \quad {\rm for ~all}~i, j.$$
We just deal with $i=r$,
the other cases are similar.
Suppose that $1\leq j\leq m$.
Since both $\varphi(x_r)$ and $\varphi(x_j)$ lie in the algebra $A(m)$,
it follows that $\{\varphi(x_r), \varphi(x_j)\}=0$.
If $m+1\leq j\leq m+r-1$,
then
\[\{\varphi(x_r), \varphi(x_j)\} =\{ x_r+y^{(u)}-1, x_j-u_{j'}y^{(u)-\epsilon_{j'}}x_{m+r} \}\]
		\[=\sigma(r)\p_r(x_r)\p_{m+r}(-u_{j'}y^{(u)-\epsilon_{j'}} x_{m+r}) + \sigma({j'})\p_{j'}(y^{(u)})\p_j(x_j)=0.\]
Now for $m+r\leq j\leq 2m$,
so that $\{\varphi(x_r), \varphi(x_j)\} =\{ x_r+y^{(u)}-1, x_j\}=\delta_ {r',j}$.
This establishes our claim.
Consequently,
Lemma \ref{automorphism groups} implies that $\sigma_\varphi\in G_\fg$.
By the same token,
we have
$$\sigma_\varphi(d)=\sum\limits_{i=1}^{r-1}\lambda_i(y_i\p_i-x_{i'}\p_{i'})+\sum\limits_{i=r}^{m}\lambda_i(x_i\p_i-x_{i'}\p_{i'})\in\ft_{H,r-1}.$$
\end{proof}

\begin{Corollary}\label{corollary}
Assume $\fg\in\{W,S,H\}$ and let $x\in S_\fg$ be a semisimple element of $\fg$.
Then
\begin{align}\label{index=mini}
\Ind_\fg(x)=\min\{r;~G_\fg.x\cap\ft_{\fg,r}\neq\emptyset,~0\leq r\leq\mu(\fg)\}.
\end{align}
Let $r\in\{0,1,\dots,\mu(\fg)\}$.
In particular, up to conjugacy,
there exists a unique maximal torus $\ft$ such that $\ft\cap\Ind_\fg^{-1}(r)\neq\emptyset$ and $\Ind_\fg(x)\leq r$ for all $x\in\ft$.
\end{Corollary}
\begin{proof}
We put $l:=\min\{r;~G_\fg.x\cap\ft_{\fg,r}\neq\emptyset,~0\leq r\leq\mu(\fg)\}$.
we may assume that $x\in \ft_{\fg,l}$.
Now Lemma \ref{key lemma} and Lemma \ref{r-1lemma} in conjunction with the minimality of $l$ yields $x\in\ft_{\fg,l}^{l}$.
Consequently,
$\Ind_\fg(x)=l$.

To verify the last assertion,
we note that $\Ind_\fg(x)\leq r$ for all $x\in\ft_{\fg,i}$ and $i\leq r$ (\ref{tgrindexleq r}).
In view of (\ref{index=mini}),
we obtain $\ft_{\fg,i}\cap\Ind_\fg^{-1}(r)=\emptyset$ whenever $i<r$.
This proves the uniqueness.
\end{proof}

\subsection{Semisimple orbits}
In this section,
we turn to the set $\Ind_\fg^{-1}(r)/G_\fg$
for the restricted Cartan type Lie algebra $\fg\in\{W,S,H\}$.
We have seen in the foregoing section that the set $\Ind_\fg^{-1}(r)$ coincides with the $G_\fg$-saturation $G_\fg.\ft_{\fg,r}^r$.
It will be necessary to consider the $G_\fg$-conjugacy relation on the set $\ft_{\fg,r}^r$.

\begin{Lemma}\label{keylemma2}
Assume that $\fg\in\{W,S,H\}$ with connected automorphism group $G_\fg$.
Let $d, t\in\ft_{\fg,r}^r$.
If $d$ and $t$ are conjugate under $G_\fg$,
then there exists $\sigma_\psi\in\N_{G_\fg}(\ft_{\fg,r})$ such that $\sigma_\psi(d)=t$.
\end{Lemma}
\begin{proof}
Let $\sigma_\varphi\in G_\fg$ be such that $\sigma_\varphi(d)=t$.
We write $d=\sum_{i=1}^{\mu(\fg)}\beta_id_i^{\fg, r}$ as well as $t=\sum_{i=1}^{\mu(\fg)}\alpha_id_i^{\fg, r}$.
Define
\[z_i=\left\{
	\begin{tabular}{ll}
	$ y_i $ & if $ i=1,\cdots, r, $\\
	$ x_i $ & otherwise.
	\end{tabular}
	\right.
	\text{and}\quad
f_i=\varphi(z_i).\]
Setting $\beta=(\beta_1,\cdots,\beta_{\mu(\fg)})$ and $\alpha=(\alpha_1,\cdots,\alpha_{\mu(\fg)})$,
we apply (\ref{assignment}) in conjunction with (\ref{standard basis W})-(\ref{standard basis H}) to see that
\begin{align}\label{weightvector W}
t\cdot(f_i)=\sigma_\varphi(d)\cdot(f_i)=(\varphi\circ d\circ\varphi^{-1})(\varphi(z_i))
=\varphi(\beta_iz_i)=\beta_if_i,~\quad~1\leq i\leq n,\quad\fg=W(n).
\end{align}
\begin{align}\label{weightvector S}
t\cdot(f_i)=\left\{
\begin{aligned}
\beta_if_i,~1\leq i\leq n-1,\\
(-\sum\limits_{j=1}^{n-1}\beta_j)f_i,~i=n.
\end{aligned}
\right.~\quad\fg=S(n).
\end{align}
\begin{align}\label{weightvector H}
t\cdot(f_i)=\left\{
\begin{aligned}
\beta_if_i,~&1\leq i\leq m,\\
-\beta_{i'}f_i,~&m+1\leq i\leq 2m.
\end{aligned}
\right.~\quad\fg=H(2m).
\end{align}
As a result,
$f_i$ is a weight vector with respect to the canonical action of $t$ on $A(n)$.
We claim that

\medskip
(\textdagger)\quad there exists matrices $A, B, \tau$ such that $\beta=\alpha\left(\begin{matrix}A & B\\ 0 & \tau\end{matrix}\right)$,
where $ A=(a_{ij})\in \GL_r(\FF_p), B=(b_{ij})$ and $\tau\in W_1$ (\ref{Weyl group W_1}).

Suppose that $\fg=W(n)$.
Since $\varphi\in\Aut(A(n))$,
the weight space decomposition ensures that
\begin{align}
f_j~{\rm has~the~term}~z_1^{a_{1j}}\cdots z_r^{a_{rj}}~{\rm with~weight}~\sum_{i=1}^r\alpha_ia_{ij}\quad 1\leq j\leq r
\end{align}
\begin{align}
f_j~{\rm has~the~term}~c_jx_{\tau(j)}z_1^{b_{1j}}\cdots z_r^{b_{rj}}~{\rm with~weight}~
\sum_{i=1}^{r}\alpha_ib_{ij}+\alpha_{\tau(j)}\quad r+1\leq j\leq n
\end{align}
where $\tau$ is a permutation on $\{r+1,\dots,n\}$.

Assume now $\fg=S(n)$.
We put $\alpha_n:=-\sum_{j=1}^{n-1}\alpha_{j}$.
The same argument shows that
\begin{align}
f_j~{\rm has~the~term}~z_1^{a_{1j}}\cdots z_r^{a_{rj}}~{\rm with~weight}~\sum_{i=1}^r\alpha_ia_{ij}\quad 1\leq j\leq r
\end{align}
\begin{align}
f_j~{\rm has~the~term}~c_jx_{\tau(j)}z_1^{b_{1j}}\cdots z_r^{b_{rj}}~{\rm with~weight}~
\sum_{i=1}^{r}\alpha_ib_{ij}+\alpha_{\tau(j)}\quad r+1\leq j\leq n
\end{align}
where $\tau$ is a permutation on $\{r+1,\dots,n\}$.

Finally consider the case $\fg=H(2m)$.
As $\sigma_\varphi\in G_\fg$,
it follows that $f_j$ has the form (modulo the corresponding weight space) $z_1^{a_{1j}}\cdots z_r^{a_{rj}}$
with weight $\sum_{i=1}^r\alpha_ia_{i,j}$ for every $j\in\{1,\dots,r\}$.

For $r+1\leq j\leq m$,
combining (\ref{auto of H}) with (\ref{weightvector H}) one obtains that
$f_j$ has the term $z_1^{b_{1j}}\cdots z_r^{b_{rj}}x_{\tau(j)}$,
where $\tau$ is a permutation on $\{r+1,\dots,m,m+r+1,\cdots,2m\}$.
In view of (\ref{auto of H}),
$\{f_j, f_{j'}\}$ is a non-zero constant.
Direct computation shows that $\tau(j')=\tau(j)'$.
so that $\tau$ can be identified with an element of $S_{m-r}\ltimes\mathbb{Z}_2^{m-r}$,
where the copies of $\mathbb{Z}_2$ act by $'$.
Consequently,
$f_j$ has weight $\sum_{i=1}^r\alpha_ib_{ij}+\sigma(\tau(j))\alpha_{\omega(j)}$ with $(\omega,a)=\tau\in S_{m-r}\ltimes\mathbb{Z}_2^{m-r}$.

Thanks to Lemma \ref{key lemma},
both $\{\alpha_1,\cdots,\alpha_r\}$ and $\{\beta_1,\cdots,\beta_r\}$ are $\FF_p$-linearly independent.
It follows that $A\in\GL_r(\FF_p)$.
This proves (\textdagger).
Now, Proposition \ref{Jesen's result} ensures the existence of $\sigma_\psi$.
\end{proof}

\begin{Theorem}\label{main theorem}
Assume that $\fg\in\{W,S,H\}$.
Let $r\in\{0,1,\dots,\mu(\fg)\}$.
Then there is a bijective correspondence $$\Ind_\fg^{-1}(r)/G_\fg\rightarrow\ft_{\fg,r}^r/W(\fg,\ft_{\fg,r}).$$
\end{Theorem}
\begin{proof}
Let $x\in\Ind_\fg^{-1}(r)$.
Corollary \ref{corollary} readily yields $G_\fg.x\cap\ft_{\fg,r}\neq\emptyset$.
It follows that $\Ind_\fg^{-1}(r)$ coincides with the $G_\fg$-saturation $G_\fg.\ft_{\fg,r}^r$.
The assertion now follows from Lemma \ref{keylemma2}.
\end{proof}

\subsection{Quotients}
In this section, we would like to give a description of the quotients $\ft_{\fg,r}^r/W(\fg,\ft_{\fg,r})$ by employing $p$-polynomials.
Recall that a \textit{$p$-polynomial} is a polynomial of the form
$$f(t)=a_lt^{p^l}+a_{l-1}t^{p^{l-1}}+\cdots+a_0t\in k[t].$$
For each $ x\in \fg,$ define $ f(x):=a_lx^{[p]^l}+a_{l-1}x^{[p]^{l-1}}+\cdots+a_0x\in\fg$.
Let $\ft\subseteq\fg$ be a torus.
Given a semisimple element $x\in\ft$,
there exists a monic $p$-polynomial $f_x(t)$ of lowest degree such that $f_x(x) =0$.
It is unique and we call it the \textit{minimal $p$-polynomial} of $x$.
By general theory, the orbit of an element $x\in\ft$ with respect to the whole group $\Aut_p(\ft)$ is completely determined by the
minimal $p$-polynomial of $x$.

In the case $r=\mu(\fg)$,
according to Proposition \ref{Jesen's result} (see also \cite[Theorem 1]{Pre} and \cite[Theorem 5.3]{BFS}),
there is an isomorphism $W(\fg,\ft_{\fg,\mu(\fg)})\cong\GL_{\mu(\fg)}(\FF_p)$.
Let $x,y\in\ft_{\fg,\mu(\fg)}$.
The foregoing observation implies that $x$ and $y$ are in the same $W(\fg,\ft_{\fg,\mu(\fg)})$-orbit if and only if $f_{x}=f_{y}$.

For general $r$, let $x\in\ft_{\fg,r}$.
We denote by $\bar{x}$ the image of $x$ under the canonical projection $\pi:\ft_{\fg,r}\rightarrow\ft_{\fg,r}/(\ft_{\fg,r}\cap\fg_{(0)})$.
Let $f_{\bar{x}}(t)$ be the minimal $p$-polynomial of $\bar{x}$.
It follows that $f_{\bar{x}}(t)$ is the monic $p$-polynomial of smallest degree such that $f_{\bar{x}}(x)\in\fg_{(0)}$.

\begin{Proposition}\label{main prop}
Keep the notations as before.
Let $x,y\in\ft_{\fg,r}$.
Then $x$ and $y$ are in the same $W(\fg,\ft_{\fg,r})$-orbit if and only if $f_{\bar{x}}=f_{\bar{y}}$ and
$f_{\bar{x}}(x),f_{\bar{y}}(y)$ lie in the same $W_1$-orbit (\ref{Weyl group W_1}).
\end{Proposition}
\begin{proof}
Let $\sigma\in W(\fg,\ft_{\fg,r})$ be such that $\sigma(x)=y$.
The Weyl group $W(\fg,\ft_{\fg,r})$ leaves invariant the subtorus $\ft_{\fg,r}\cap\fg_{(0)}$.
It follows that $\sigma\in\Aut_p(\ft_{\fg,r}/(\ft_{\fg,r}\cap\fg_{(0)}))$ and $\sigma(\bar{x})=\bar{y}$.
Consequently, $f_{\bar{x}}=f_{\bar{y}}$.
Now, let $f_{\bar{x}}(t)=f_{\bar{y}}(t)=t^{p^l}+a_{l-1}t^{p^{l-1}}+\cdots+a_0t$.
We have
\begin{eqnarray*}
\sigma(f_{\bar{x}}(x))&=&\sigma(x^{p^l}+a_{l-1}x^{p^{l-1}}+\cdots+a_0x)\\
&=&\sigma(x)^{p^l}+a_{l-1}\sigma(x)^{p^{l-1}}+\cdots+a_0\sigma(x)=f_{\bar{y}}(y).
 \end{eqnarray*}
Since $W(\fg,\ft_{\fg,r})$ acts on the subtorus $\ft_{\fg,r}\cap\fg_{(0)}$ via the classical Weyl group $W_1$,
there exists an element $w\in W_1$ such that $\sigma(f_{\bar{x}}(x))=w(f_{\bar{x}}(x))=f_{\bar{y}}(y)$.

Conversely, denote $f:=f_{\bar{x}}=f_{\bar{y}}$. Let $\tau\in W_1$ be such that $\tau(f(x))=f(y)$.

We write $x=(x',x'')$ and $y=(y',y'')$ with $x',y'\in\ft_{\fg,r}'$ and $x'',y''\in\ft_{\fg,r}''$ (see (\ref{tori direct sum})).
It is easy to see that $ f $ is also the minimal $p$-polynomial of $x'$ and $y'$.
General theory provides an invertible matrix $A\in\GL_{r}(\FF_p)$ such that $x'A=y'$.
Since the $p$-polynomial is additive,
it follows that $\tau(f(x))=\tau(f(x'+x''))=\tau(f(x''))=f(y)=f(y'+y'')=f(y'')$.
As a result, $f(x'+y''-\tau(x''))=0$.
Hence, $f$ is the minimal $p$-polynomial of $x'+y''-\tau(x'')$.
By the same token, there exists an invertible matrix $\left(\begin{matrix}B & C\\ D & E\end{matrix}\right)\in\GL_{\mu(\fg)}(\FF_p)$ such that $(x',0)\left(\begin{matrix}B & C\\ D & E\end{matrix}\right)=(x',y''-\tau(x''))$.
Consequently, $(x',x'')\left(\begin{matrix}A& C\\ 0 & \tau\end{matrix}\right)=(y',y'')$,
and our assertion now follows
directly from Proposition \ref{Jesen's result}.
\end{proof}

\section{Normalizers and centralizers in $W(n)$}
In this section,
$\fg$ always denotes the $n$-th Jacobson-Witt algebra.
For convenience, we set $\ft_r:=\ft_{\fg,r},~0\leq r\leq n$ (see (\ref{tori in W})).
We shall compute $\N_{G_\fg}(\ft)$ and $\C_{G_\fg}(\ft)$ for every $\ft\in\Tor(\fg)$.
Up to conjugacy,
we may thus assume that $\ft=\ft_r$ for some $r\in\{0,1,\dots,n\}$.
It should be noted that the isomorphisms
\begin{align}\label{premet's result}
\N_{G_\fg}(\ft_n)\cong\GL_n(\FF_p)\quad{\rm and}\quad \C_{G_\fg}(\ft_n)\cong\{\id_{\ft_n}\}
\end{align}
were established by Premet (see \cite[p. 139]{Pre}).

Recall that $\ft_r=\sum_{i=1}^n kz_i\p_i$,
where $z_i=y_i$ for $1\leq i\leq r$ and $z_i=x_i$ for $r+1\leq i\leq n$ (\ref{standard basis W}).

\begin{Proposition}\label{normalizers of tr}
Assume $r\in\{0,\dots,n\}$.
Then
$$\N_{G_\fg}(\ft_r)=\{\varphi\in\Aut(A(n));~\varphi~{\rm has~form~}~(\ast)\}$$

($\ast$)
If $1\leq j\leq r$,
$\varphi(z_j)=\prod_{i=1}^rz_i^{m_{ij}}$,
where $(m_{ij})_{1\leq i,j\leq r}\in\GL_r(\FF_p)$.
If $ r+1\leq j\leq n$,
$\varphi(z_j)=a_jz_{\tau(j)}\prod_{l=1}^rz_l^{m_{lj}}$,
 where $a_j\in k^*,~\tau\in S_{n-r}, m_{lj}\in\FF_p$.

\end{Proposition}
\begin{proof}
Let $\varphi\in\Aut(A(n))$ be such that $\sigma_\varphi\in\N_{G_\fg}(\ft_r)$.
There exists an invertible matrix $(m_{ij})\in\GL_n(k)$ such that
$$z_i\p_i=\sum\limits_{j=1}^n m_{ij}\sigma_\varphi(z_j\p_j)\quad 1\leq i\leq n.$$
We put $f_j:=\varphi(z_j)$,
then it is a simple exercise in linear algebra to show
\begin{align}\label{formula}
z_i\p_if_j=m_{ij}f_j,\quad 1\leq i, j\leq n.
\end{align}	
Note that $\{z_1^{a_1}\cdots z_n^{a_n};~0\leq a_1,\dots,a_n<p\}$ is a $k$-basis of $A(n)$
consisting of weight vectors.
Our assertion follows from (\ref{formula}) in conjunction with the weight space decomposition.
\end{proof}
\begin{Corollary}
$\C_{G_\fg}(\ft_r)\cong (k^*)^{n-r}$.
In particular, $\C_G(\ft_r)=\N_G(\ft_r)^{\circ}$.
\end{Corollary}
\begin{proof}
As $\sigma_\varphi\in \C_{G_\fg}(\ft_r)$,
direct computation shows that the weights $z_i$ and $\varphi(z_i)$ are the same.
The assertion follows by applying the similar argument as in Proposition \ref{normalizers of tr}.
\end{proof}


\end{document}